\newtheorem{theorem}{Theorem}[section]
\newtheorem{lemma}[theorem]{Lemma}
\newtheorem{proposition}[theorem]{Proposition}
\newtheorem{conjecture}[theorem]{Conjecture}
\theoremstyle{definition}     
\newtheorem{definition}[theorem]{Definition}
\newtheorem{example}[theorem]{Example}
\newtheorem{claim}[theorem]{Claim}
\theoremstyle{remark}
\newcommand\Pic{\text{\rm Pic}}
\newcommand\ot{{\otimes}}
\newcommand\OO{{\mathcal{O}}}
\newcommand\PP{{\mathcal{P}}}
\newcommand\II{{\mathcal{I}}}
\newcommand\sh{{\hat {\mathcal S}}}
\newcommand\s{{ {\mathcal S}}}
\newcommand\dimm{{\text{\rm dim}}}
\newcommand\ann{{\text{\rm Ann}}}
\newcommand\rank{{\text{\rm rank}}}
\newcommand\rk{{\text{\rm rank}}}
\newcommand\f[1]{{{\mathfrak{{#1}}}}}
\newcommand\two[1]{{{t_1^{(#1)}}}}
\newcommand\twg[1]{{{t_g^{(#1)}}}}
\begin{document}
\title{On Ueno's conjecture K}
\author{Jungkai A. Chen \and Christopher D. Hacon}
\address{Department of Mathematics, National Taiwan University, Taipei 10617,
Taiwan.} \address{National Center for Theoretical Sciences, Taipei Office.}
\address{Taida Institute for Mathematical Sciences}
 \email{jkchen@math.ntu.edu.tw}

\address{Department of Mathematics,  University of Utah,
155 South 1400 East, JWB 233, Salt Lake City, UT 84112-0090, USA}
\email{hacon@math.utah.edu}
\thanks{The first author was partially supported by NSC, TIMS and NCTS of Taiwan. \\The second author was partially supported by NSF research grant  no: 0456363 and an AMS Centennial Scholarship. We would like to thank J. Koll\'ar, R. Lazarsfeld, C.-H. Liu, M. Popa, P. Roberts, and A. Singh for many useful comments on the contents of this paper.}
\maketitle
\begin{abstract} We show that if
$X$ is a smooth complex projective variety with Kodaira dimension $0$ then
the Kodaira dimension
of a general fiber of its Albanese map is at most $h^0(\Omega ^1 _X)$.
\end{abstract}\section{Introduction}
Let $X$ be a smooth projective
variety with $\kappa(X)=0$. Let $a: X \to A$ be its
Albanese map with general fiber $F$. Then Ueno's Conjecture K states that:
\begin{conjecture} 1) $a$ is an algebraic fiber space (i.e. it is surjective with connected
fibers),

2) $\kappa (F)=0$ and

3) there is an \'etale cover $B\to A$ such that $X\times _A B$ is birationally equivalent to $F\times B$ over $A$.
\end{conjecture}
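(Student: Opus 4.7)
The plan is to establish Ueno's three assertions in sequence. For part (1), the surjectivity of the Albanese map of a variety with $\kappa = 0$ is Kawamata's 1981 theorem; connectedness of the fibers then follows from Stein factorization combined with the universal property of the Albanese. Writing $a = \pi \circ f$ with $\pi : B \to A$ finite, the variety $B$ admits a finite morphism to the abelian variety $A$ and satisfies $\kappa(B) \le \kappa(X) = 0$, so $B$ is itself an abelian variety and $\pi$ is étale; the universal property of the Albanese then forces $\pi$ to be an isomorphism.

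For part (2), the bound $\kappa(F) \le 0$ is essentially Iitaka's conjecture $C_{n,m}$ for fibrations over abelian bases, and proving it is itself a substantial obstacle. The approach I would take rests on generic vanishing: by Hacon's results, $a_*\omega_{X/A}^{\otimes m}$ is a GV-sheaf on $A$ for every $m$, and its generic rank equals $h^0(F, \omega_F^{\otimes m})$. If $\kappa(F) > 0$, then $\rk(a_*\omega_{X/A}^{\otimes m})$ grows polynomially in $m$, and combining this growth with the GV property should produce nonvanishing twisted sections of $\omega_X^{\otimes m} \otimes a^*P$ contradicting $\kappa(X) = 0$. To rule out $\kappa(F) = -\infty$, use that $\kappa(X) = 0$ provides a nonzero $s \in H^0(X, \omega_X^{\otimes m})$; via $\omega_A \cong \OO_A$ such $s$ is a global section of $a_*\omega_{X/A}^{\otimes m}$, and its generic fiber lies in $H^0(F, \omega_F^{\otimes m})$, forcing $\kappa(F) \ge 0$.

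Part (3) is where the deepest obstacle lies. With parts (1) and (2) in hand, $a_*\omega_{X/A}^{\otimes m}$ has generic rank one for sufficiently divisible $m$. The natural strategy is a decomposition theorem for this GV-sheaf: express it as a direct sum of pullbacks of torsion line bundles from quotients $A \to A_i$, then take the common étale cover $B \to A$ trivializing all of the summands. On $X \times_A B$ the relative $m$-canonical sheaf should then become trivial, and the associated pluricanonical rational map would realize the desired birational equivalence with $F \times B$ over $B$. The main difficulty is proving such a structure theorem in sufficient generality and upgrading the resulting fiberwise trivialization to a \emph{global} birational splitting --- this requires very delicate positivity control of relative canonical sheaves and a fine analysis of the Iitaka fibration over an abelian base, and appears to be the deepest component of the conjecture.
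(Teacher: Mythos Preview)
The statement you are addressing is recorded in the paper as a \emph{conjecture} (Ueno's Conjecture K), not a theorem; the paper does not prove it and therefore has no proof to compare against. The paper attributes part (1) to Kawamata, and its own contribution is the much weaker bound $\kappa(F)\le \dim A$ toward part (2), obtained via Fourier--Mukai transforms and an analysis of Artinian modules without decomposable submodules. Parts (2) and (3) in full generality are left open.

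Your write-up is a reasonable strategic outline, and you correctly flag (2) and (3) as the deep obstacles, but it is not a proof. For part (2), the assertion that growth of $\rk\, a_*\omega_{X/A}^{\otimes m}$ together with the GV property ``should produce nonvanishing twisted sections \ldots\ contradicting $\kappa(X)=0$'' is exactly the missing step: a GV-sheaf of large rank forces sections only of twists $\omega_X^{\otimes m}\otimes a^*P$ for varying $P\in\Pic^0(A)$, not of $\omega_X^{\otimes m}$ itself, and passing from the former to a contradiction with $\kappa(X)=0$ is precisely the difficulty. The paper's entire machinery is devoted to this passage and still yields only $\kappa(F)\le\dim A$, not $\kappa(F)\le 0$. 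For part (3), the decomposition theorem and the upgrade from fiberwise trivialization to a global birational splitting that you describe were open problems; you have identified the right strategy but supplied none of the needed arguments.
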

This conjecture is an important test case of the more general
$C_{n,m}$ conjecture of Iitaka which states that: Given a surjective
morphism of smooth complex projective varieties $f:X\to Y$,
$n=\dim X$ and $m=\dim Y$, with connected general fiber $F$, then
$$\kappa (X)\geq \kappa (F)+\kappa (Y).$$
Kawamata has shown (cf. \cite{Kawamata85}) that these conjectures follow
from the conjectures of the Minimal Model Program (including abundance).
He has also shown:
\begin{theorem}\label{TK1} Conjecture K 1) is true (see Theorem 1 of
\cite{Kawamata81}).\end{theorem}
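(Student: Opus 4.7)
The plan is to establish the two assertions of Conjecture K 1) separately: surjectivity of $a$, and connectedness of its general fiber.

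\smallskip
\textbf{Surjectivity.} I would argue by contradiction. Suppose $Y := a(X) \subsetneq A$. By Ueno's structure theorem for subvarieties of an abelian variety, there is a quotient morphism $p\colon A \to A/B$ onto an abelian variety for which $p(Y) \subset A/B$ is a subvariety of general type. If $\dim p(Y) = 0$ then $Y$ is contained in a translate of $B$; but $a(X)$ generates $A$ as a group by the universal property of the Albanese, which would force $B = A$ and $Y = A$, contrary to assumption. Hence $\dim p(Y) \geq 1$. Composing $X \to Y \to p(Y)$ and passing to a Stein factorization yields an algebraic fiber space with base of general type, and Viehweg's theorem that $C_{n,m}$ holds over a base of general type gives
\[ \kappa(X) \;\geq\; \kappa(F') + \kappa\bigl(p(Y)\bigr) \;\geq\; \dim p(Y) \;\geq\; 1, \]
where $F'$ denotes the general fiber of this composition. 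This contradicts $\kappa(X) = 0$.

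\smallskip
\textbf{Connectedness of general fiber.} I would take the Stein factorization $X \to Z \xrightarrow{g} A$, so that $X \to Z$ has connected general fiber and $g$ is finite. The key claim is that $g$ is \'etale. Granting this, $Z$ is an \'etale cover of an abelian variety, hence itself an abelian variety. The universal property of $\Alb(X) = A$ applied to $X \to Z$ produces (after translation) a homomorphism $\phi\colon A \to Z$ with $\phi \circ a$ equal to $X \to Z$. Then $g \circ \phi$ agrees with the identity on $a(X)$, which generates $A$, so $g \circ \phi = \mathrm{id}_A$; since $Z$ is irreducible and $g$ is finite, this forces $g$ to be an isomorphism. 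Therefore the general fiber of $a$ is already connected.

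\smallskip
\textbf{Main obstacle.} The delicate step is the \'etaleness of $g\colon Z \to A$. If $g$ were ramified along a divisor $D \subset A$, the goal is to derive a contradiction from the fact that every nonzero effective divisor on an abelian variety has strictly positive Iitaka dimension (being semi-ample modulo its stabilizer subvariety). Combining this with Viehweg-type weak positivity of $a_\ast\omega_{X/A}^{\otimes m}$, one would convert the branch contribution into non-trivial pluri-canonical sections on $X$, contradicting $\kappa(X) = 0$. Making this ramification analysis precise is the technical heart of Kawamata's proof in \cite{Kawamata81}.
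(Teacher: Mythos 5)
The paper offers no proof of this statement --- it is quoted as Theorem~1 of \cite{Kawamata81} --- so there is no internal argument to compare against, only Kawamata's original one.

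Your surjectivity argument is essentially Kawamata's and is complete: reduce to a subvariety $Y = a(X)\subsetneq A$, apply Ueno's structure theorem to obtain a quotient $A\to A/B$ with $p(Y)$ of general type, rule out $\dim p(Y)=0$ because $a(X)$ generates $A$, and then invoke $C_{n,m}$ over a base of general type to get $\kappa(X)\geq 1$, a contradiction. (One should also note that $\kappa(F')=-\infty$ is excluded because it would force $\kappa(X)=-\infty$, but that is routine.) Your treatment of the \'etale case in the connectedness step is likewise correct: if $g\colon Z\to A$ were a nontrivial \'etale cover, the universal property of $\Alb(X)=A$ would produce a section of $g$ and force $\deg g=1$.

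The genuine gap is exactly where you flag it, but the difficulty is sharper than ``derive extra sections from weak positivity.'' What is needed is Kawamata's structure theorem for finite covers of abelian varieties: if $\pi\colon\tilde Z\to A$ is generically finite and ramified in codimension one, then $\kappa(\tilde Z)\geq 1$, and in fact (after a birational modification and \'etale base change) $\tilde Z$ fibers over a positive-dimensional variety of general type which dominates a quotient torus $A/B$. This is the result that plays for the connectedness step the same role Ueno's subvariety theorem plays for surjectivity. Once it is available, one composes $X\to\tilde Z$ with that fibration and again applies $C_{n,m}$ over a base of general type, contradicting $\kappa(X)=0$. Your proposed shortcut --- combining ``effective divisors on $A$ have positive Iitaka dimension'' with weak positivity of $a_*\omega_{X/A}^{\otimes m}$ to produce two pluricanonical sections on $X$ --- does not close by itself: it silently requires something like $\kappa(X)\geq\kappa(\tilde Z)$ for the fiber space $X\to\tilde Z$, and that inequality is not elementary over an arbitrary base (it is not even known without the general-type hypothesis). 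So the ramification half of the connectedness claim is a real missing theorem, not merely a computation left to the reader.
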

\begin{theorem}\label{TK2} \cite{Kawamata82} Let $X$ be a smooth projective variety with $\kappa(X)=0$
and $a: X \to A$ its Albanese map with general fiber $F$. If $\dim A =1$,
then $\kappa (F)=0$. \end{theorem}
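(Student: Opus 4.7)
By Theorem~\ref{TK1} we may assume $a:X\to A$ is an algebraic fiber space; since $\dim A=1$, $A$ is an elliptic curve with $\omega_A\cong\OO_A$, and adjunction gives $\ox|_F\cong \omega_F$ for a general fiber $F=a^{-1}(t)$. The plan is to establish the two inequalities $\kappa(F)\leq 0$ and $\kappa(F)\geq 0$ separately.

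For the upper bound I would invoke Viehweg's subadditivity theorem for fiber spaces over a curve base (the $C_{n,1}$ case of the Iitaka conjecture), whose proof rests on the weak positivity of $a_*\omega_{X/A}^m$ established by Fujita, Kawamata, and Viehweg. Applied to $a$, it gives
\[
0=\kappa(X)\;\geq\; \kappa(F)+\kappa(A)\;=\;\kappa(F),
\]
hence $\kappa(F)\leq 0$.

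For the lower bound I would argue directly from $\kappa(X)=0$: choose $m_0\geq 1$ and a nonzero $s\in H^0(X,\ox^{m_0})$, and let $Z=(s)_0$ be its zero divisor. Each irreducible component of $Z$ either dominates $A$ (and so meets a general fiber in codimension at least one) or is contracted to a point of $A$ (and so is contained in a single fiber). Hence only finitely many fibers $F_t$ can be contained in $Z$, and for $t$ in a nonempty open subset of $A$ the restriction $s|_{F_t}$ is a nonzero section of $\omega_{F_t}^{m_0}\cong \omega_F^{m_0}$. This yields $P_{m_0}(F)\geq 1$, and so $\kappa(F)\geq 0$. Combining the two bounds gives $\kappa(F)=0$.

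The principal obstacle is the upper bound: $C_{n,1}$ over an elliptic base is the substantial input, encoding the full positivity theory for direct images of relative pluricanonical sheaves. The lower bound, by contrast, is essentially formal, reflecting the elementary fact that a nonzero pluricanonical section on $X$ cannot vanish identically on every fiber of $a$.
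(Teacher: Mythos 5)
The paper does not prove Theorem~\ref{TK2}; it is quoted directly from \cite{Kawamata82}, so there is no internal argument to compare against. Your two-inequality decomposition is mathematically correct, and the elementary lower bound is a point worth making explicit, since $C_{n,1}$ by itself only gives $\kappa(F)\leq 0$: a nonzero $s\in H^0(X,\ox^{m_0})$ has a zero divisor with finitely many components, so only finitely many fibers lie in it, and $\ox|_F\cong\omega_F$ by adjunction because the normal bundle of a general fiber is trivial; hence $P_{m_0}(F)\geq 1$. The concern is with the upper bound. $C_{n,1}$ over a base of genus $\geq 2$ is Kawamata--Viehweg, over $\mathbb{P}^1$ it is vacuous, and over an elliptic curve --- the only case relevant here --- it is precisely the main theorem of \cite{Kawamata82} itself. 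Weak positivity of $a_*\omega_{X/A}^m$ alone does not yield it: when the base has trivial canonical bundle, weak positivity gives no leverage, and Kawamata must carry out a delicate structure analysis of the degree-zero semipositive direct-image bundles that arise. The attribution to Viehweg is therefore misleading, and more importantly your proposal reduces TK2 to the main result of the very reference being cited. That reduction is correct as far as it goes, and the lower-bound supplement is genuinely useful, but it should not be mistaken for an independent or simpler proof.
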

$C_{n,m}$ then follows easily for any fiber space $f:X\to Y$ where $Y$ is an elliptic curve.
Since it is known that the $C_{n,m}$ conjecture holds when the base is of general type (cf. \cite{Kawamata81} and \cite{Viehweg82}), then $C_{n,1}$ holds.
It is also worth noting that by \cite{kollar87}, conjecture $C_{n,m}$
is known to hold when the general fiber $F$ is of general type.

In this paper we prove the following:
\begin{theorem}\label{mainthm} Let $X$ be a smooth projective variety with $\kappa(X)=0$
and $a: X \to A$ be its Albanese map with general fiber $F$.
Then $\kappa (F)\leq \dim A$.
\end{theorem}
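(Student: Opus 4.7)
The plan is to factor the Albanese map $a : X \to A$ through the relative Iitaka fibration of $\omega_X$ over $A$, and then apply the cases of Iitaka's $C_{n,m}$ conjecture due to Viehweg (base of general type) together with an induction on $\dim A$ whose base case is Theorem~\ref{TK2}.

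After a birational modification of $X$, let $\phi : X \to V$ be the relative Iitaka fibration of $\omega_X$ over $A$: then $p : V \to A$ is surjective of relative dimension $\kappa(F)$ with general fiber $W$ of general type, while $\phi$ has general fiber $F'$ with $\kappa(F') = 0$. A short Albanese diagram chase, using that the composition $X \to V \to A$ agrees with the Albanese of $X$ and that $X \to V$ is surjective with connected fibers, identifies $\Alb(V) \cong A$.

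If $p$ has maximal variation, i.e. $\mathrm{Var}(p) = \dim A$, then by Viehweg-Koll\'ar positivity of direct images of pluri-dualizing sheaves, $\omega_{V/A}$ is big; combined with the triviality of $\omega_A$, this makes $V$ of general type. Viehweg's theorem on $C_{n,m}$ over a base of general type (cited in the introduction as \cite{Viehweg82}) applied to $\phi$ then gives $\kappa(X) \geq \dim V = \dim A + \kappa(F)$; together with $\kappa(X) = 0$ this forces $\kappa(F) \leq -\dim A$, strictly stronger than the claim. In particular, the maximal-variation case cannot arise for $\dim A \geq 1$ unless $F$ is uniruled, and we may assume henceforth that $p$ does not have maximal variation.

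The main obstacle is reducing the non-maximal-variation case to the maximal one. Since $A$ is abelian, the isotrivial directions of $p$ should cut out an abelian subvariety $B \subset A$, so that $p$ descends (after a suitable \'etale cover of $A$) to a fibration of maximal variation over $A/B$; induction on $\dim A$ with base case $\dim A = 1$ supplied by Theorem~\ref{TK2} (which in fact gives $\kappa(F) = 0$) would then conclude. Making this descent compatible with the hypothesis $\kappa(X) = 0$ and with the Albanese structure on $X$ is the technical heart of the argument, and is where one expects the full generic vanishing theory for the pluricanonical sheaves $a_* \omega_X^{\otimes m}$ on $A$ — via Hacon's Fourier-Mukai techniques — to be essential, in order to control how pluricanonical sections decompose under the quotient $A \to A/B$ and to ensure that the Kodaira dimension hypothesis is preserved by the reduction.
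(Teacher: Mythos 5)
Your proposal takes a genuinely different route from the paper, and it has a real gap. The paper works directly with the direct images $V_N = a_*(\omega_X^{N}\otimes \II_{N-1})$: by \cite{Hacon04} these are unipotent vector bundles with $h^0=1$ and rank $P_N(F)$, and the Fourier--Mukai transform converts the multiplication structure $V_N\otimes V_M\to V_{N+M}$ into a graded family of Artinian modules $M_t$ over the $g$-dimensional regular local ring $\OO_{\hat A,\hat 0}$. The combinatorial analysis of \emph{geometric} multiplication maps and AS-extensions in Section~2 then bounds $\dim_k M_t = O(t^g)$ directly, which gives $\kappa(F)\le g$ with no appeal to variation of moduli, relative Iitaka fibrations, or induction on $\dim A$.

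Your maximal-variation branch can be made to work, but not quite by the route you state: bigness of $\omega_{V/A}$ is not automatic from maximal variation. What you should invoke is the Koll\'ar--Viehweg subadditivity with variation for fibers of general type (the $Q_{n,m}$-type statement), which gives $\kappa(V)\ge \kappa(W)+\mathrm{Var}(p)=\dim V$, so $V$ is of general type, and then Viehweg's theorem for a general-type base applied to $\phi$ produces the contradiction. The genuine gap is the non-maximal-variation branch, and you acknowledge as much. The claimed descent of $p$ to a maximal-variation fibration over $A/B$ after an \'etale cover is a substantial structure theorem — essentially an isotriviality/descent statement for families of general-type varieties over an abelian variety — which you neither prove nor reduce to a cited result. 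Even granting it, the induction is not set up: the hypothesis $\kappa(X)=0$ lives on $X$, while the descended fibration lives on a quotient of $V$; you give no mechanism for transporting the Kodaira-dimension-zero condition across the reduction, nor for identifying the Albanese map of the new total space with the quotient $A\to A/B$. In short, the proposal isolates a plausible strategy but leaves its central step — the entire non-maximal-variation case — unproved, whereas the paper avoids this difficulty altogether by bounding the growth of the Artinian modules $M_t$.
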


We now proceed to briefly sketch the proof of \eqref{mainthm}.
This loosely follows the main ideas that Kawamata uses in the
proof of \eqref{TK2}. 

In \cite{Hacon04}, using the theory of Fourier-Mukai
transforms, it is shown that if $N\geq 2$ and $P_N(X)=1$, 
then there exists an ideal sheaf $\II
_{N-1} \subset \OO _X$ such that $V_N:=a_*(\omega _X^N\ot \II
_{N-1})$ is a unipotent vector bundle (i.e. given by successive extensions by $\OO _A$) with $h^0(V_N)=1$ and $\rk
(V_N)=P_N(F)$.

We fix an integer $N_1$ such that $|N_1K_F|$ induces the Iitaka
fibration of $F$.
Let $U_t$ be the image of $V_{N_1}^{\ot t}$
in $V_{tN_1}$ (under the natural multiplication map).
Then  $U_t$ is also a unipotent vector bundle and its rank (as a function of $t$) grows like
$t^{\kappa (F)}$. In order to bound the rate of growth of the
ranks of $U_t$, using the theory of Fourier-Mukai
transforms, we consider an equivalent problem concerning modules
$M_t:=R\sh (U_t)$ over the regular local ring $\OO _{\hat{A},\hat{0}}$ of length equal to the rank of $U_t$. Since
$h^0(U_N)=1$ and the multiplication maps above are non-zero on
simple tensors, it turns out that the modules $M_t$ have no
decomposable submodules, and for any submodules $L\subset M_t$ and
$L'\subset M_s$, the dimension of the image of the Pontryagin
product $L*L'$ under the natural map from $M_t*M_s$ to $M_{t+s}$,
always has length at least $\dim _k (L)+\dim _k (L') -1$. This
allows us to define natural extensions $M_t\hookrightarrow
\bar{M}_t$ and multiplication maps $\bar{M}_t*\bar{M}_s \to
\bar{M}_{t+s}$ which behave analogously to the case in which $A$
is a $g$-fold product of elliptic curves. We are hence able to
show that the rate of growth of $\dim _k(\bar{M}_t)$ (and hence of
$\dim _k(M_t)$) is bounded by $O(t^g)$. Therefore $\kappa (F)\leq
g$ where $g=\dim A$.

We believe that a more detailed analysis of the modules $M_t$ would show that the image of the relative Iitaka fibration, denoted by $f:X\to W$,
of the Albanese map $X \to A$ is a
$\mathbb{P}^{\kappa(F)}$-bundle over $A$. We do not pursue this here.

\subsection{Notation and conventions.}\label{NC}
Throughout this paper, we work over $k=\mathbb{C}$. If $X$ is a
smooth projective variety, then $K_X$ will denote a canonical
divisor and we let $\omega _X=\OO _X(K_X)$. For any integer $N>0$,
we let $P_N(X)=h^0(X,\omega _X^N)$ so that $p_g(X)=P_1(X)$. The Kodaira dimension of
$\kappa (X)\in \{ -\infty, 0, 1, \ldots, \dim X \}$ is defined as
follows: if $P_N(X)=0$ for all $N>0$, then $\kappa(X)=-\infty$,
otherwise we let $\kappa (X)=m$, where $0\leq m\leq \dim X$ is an
integer such that $P_N(X)=O(N^m)$ (that is, there are constants
$\alpha,\beta>0$ such that $\alpha N^m\leq P_N(X)\leq \beta N^m$
for all $N\gg 0$ sufficiently divisible).
\subsection{The Fourier-Mukai Transform}
For any abelian variety $A$ of dimension $g$, we let $\hat{A}=\Pic
^0(A)$ be the dual abelian variety and $\PP$ the (normalized)
Poincar\'e line bundle on $A\times \hat{A}$. By \cite{Mukai},
there is a functor $\sh$ from the category of $\OO _A$-modules to
the category of $\OO _{\hat{A}}$-modules defined by $\sh
(M)=(p_{\hat{A}})_* (p_A^*M\ot \PP)$ where $p_A, p_{\hat{A}}$
denote the projections from $A\times \hat{A}$ to $A,\hat{A}$.
Similarly, for any $\OO _{\hat{A}}$-module $N$, one defines $\s
(N)=(p_{{A}})_* (p_{\hat {A}}^*N\ot \PP)$. By (2.2) of
\cite{Mukai}, there are isomorphisms of derived functors $$R\sh \circ R\s
\cong (-1_{\hat {A}})^*[-g],\qquad R\s \circ R\sh \cong (-1_{
{A}})^*[-g],$$ where $R\sh , R\s$ are the the derived functors of
$\sh, \s$ and $[-g]$ denotes shifting a complex $g$ spaces to the
right.

We will consider the dualizing functor on $A$ given by $$\Delta
_A(? )= R{\mathcal H}{\rm om}(?, \OO _A)[g].$$ By 3.8 of
\cite{Mukai}, we have $$\Delta _A\circ R \s=((-1_A)^*\circ R\s
\circ \Delta _{\hat{A}})[g].$$ A vector bundle $U$ on $A$ is {\it
unipotent} if there is a sequence of sub-bundles
$$0=U_0\subset U_1\subset \ldots \subset U_r=U$$ such that
$U_i/U_{i-1}\cong \OO _A$ for all $1\leq i\leq r=\rank (U)$. We
have that $R^g\sh$ gives an equivalence in categories between
unipotent vector bundles on $A$ and the category of coherent
sheaves supported on $\hat {0}$, i.e. the category of Artinian
$\OO _{\hat{A},\hat{0}}$ modules of finite length.

Recall that we have (cf. \S 3 of \cite{Mukai})
\begin{proposition}\label{fm}
Let $U,V$ be unipotent vector bundles on $A$, then $U\otimes V$
and $U^*$ are unipotent vector bundles and we have $R^g\sh
(U\otimes V)\cong R^g\sh (U) * R^g\sh (V)$ and $R^g\sh (U^*)\cong
(-1_{\hat{A}})^*\Delta_{\hat{A}} ( R^g\sh (U))$.
\end{proposition}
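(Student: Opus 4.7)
\emph{Plan.} Both unipotence statements are formal: given filtrations $0=U_0\subset\cdots\subset U_r=U$ and $0=V_0\subset\cdots\subset V_s=V$ with successive quotients isomorphic to $\OO_A$, the filtration $U_i\otimes V$ of $U\otimes V$ has successive quotients $V$ which can themselves be filtered by $\OO_A$; dualizing the filtration of $U$ yields $0\subset(U/U_{r-1})^*\subset\cdots\subset U^*$ with successive quotients $(U_i/U_{i-1})^*\cong\OO_A$.

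For the two isomorphisms, the first observation is that on unipotent bundles $R^i\sh$ vanishes for $i<g$, so $R\sh(U)=R^g\sh(U)[-g]$ in $D(\hat A)$; by induction on the length of the filtration this reduces to Mukai's computation $R\sh(\OO_A)=k(\hat 0)[-g]$. Dually, for a coherent sheaf $M$ supported at $\hat 0$ the restriction $\PP|_{A\times\{\hat 0\}}=\OO_A$ gives $R^i\s(M)=0$ for $i>0$, and combined with Mukai's inversion $R\s\circ R\sh\cong(-1_A)^*[-g]$ this exhibits $\s$ as a quasi-inverse to $R^g\sh$ up to the involution $(-1_A)^*$; in particular $\s(R^g\sh(U))\cong(-1_A)^*U$.

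To prove the tensor--Pontryagin formula, I set $M=R^g\sh(U)$, $N=R^g\sh(V)$ and invoke Mukai's exchange formula $R\s(M*_{\hat A}N)\cong R\s(M)\otimes^L R\s(N)$, which interchanges Pontryagin product on $\hat A$ with tensor product on $A$ under the Fourier--Mukai transform. Since $\s(M),\s(N)$ are locally free and $M*_{\hat A}N$ is Artinian at $\hat 0$, all derived functors collapse to the underived ones, yielding $\s(M*_{\hat A}N)\cong\s(M)\otimes\s(N)\cong(-1_A)^*(U\otimes V)$. Applying $R^g\sh$ and the equivalence of categories gives $R^g\sh(U\otimes V)\cong M*_{\hat A}N$. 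For the dual formula I apply the $\hat A$-version $\Delta_{\hat A}\circ R\sh\cong((-1_{\hat A})^*\circ R\sh\circ\Delta_A)[g]$ of the duality identity recalled in the excerpt to $U$: since $U$ is locally free, $\Delta_A(U)=U^*[g]$; substituting $R\sh(U)=M[-g]$ and using $\Delta(F[n])=\Delta(F)[-n]$ together with the vanishing from step one reduces the equation to $\Delta_{\hat A}(M)\cong(-1_{\hat A})^*R^g\sh(U^*)$, from which the claim follows after pulling back by $-1_{\hat A}$.

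The main difficulty is bookkeeping rather than conceptual: one must consistently track the degree shifts arising from $R\sh$ and $R\s$ being concentrated in a single cohomological degree on the relevant subcategories, the shifts $[g]$ built into $\Delta_A$ and $\Delta_{\hat A}$, and the sign involutions $(-1_A)^*,(-1_{\hat A})^*$ appearing in Mukai's inversion and duality identities, so that derived-category isomorphisms correctly descend to isomorphisms of ordinary sheaves.
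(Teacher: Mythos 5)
The paper does not give a proof of this proposition: it is stated with ``Recall that we have (cf.\ \S 3 of Mukai)'' and is simply a citation to Mukai's 1981 paper. So there is no in-paper argument to compare against, and your task here is really to verify that the statement does follow from Mukai's basic toolkit, which you do.

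Your derivation is correct. The filtration argument for unipotence of $U\otimes V$ and $U^*$ is the standard one, and the reduction of both isomorphisms to Mukai's inversion theorem, the tensor/Pontryagin exchange formula, and the duality identity is exactly where these facts live (Mukai \S 3). Two points of care that you correctly identify but somewhat compress: (i) after obtaining $\s(M*N)\cong(-1_A)^*(U\otimes V)$, you need the compatibility $R\sh\circ(-1_A)^*\cong(-1_{\hat A})^*\circ R\sh$ (coming from $(-1_A\times -1_{\hat A})^*\PP\cong\PP$) so that the $(-1_A)^*$ is absorbed and does not produce a stray involution in the final formula $R^g\sh(U\otimes V)\cong M*N$; (ii) in the dual formula you must confirm that $\Delta_{\hat A}(M)$ is concentrated in degree $0$, which holds because $M$ is supported at $\hat 0$ so that $\mathcal{E}xt^i(M,\OO_{\hat A})=0$ for $i<g$. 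With those two remarks the degree/sign bookkeeping works out exactly to the stated formulas. Net: you proved something the paper merely cites, and your argument is a legitimate derivation from the same source the paper points to.
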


Here the Pontryagin product $R^g\sh (U) * R^g\sh (V)$ is just the
push-forward $\mu _*(R^g\sh (U) \boxtimes R^g\sh (V))$ via the
multiplication map $\mu:\hat{A}\times \hat{A}\to \hat{A}$ given by
$\mu(x,y)=x+y$. Equivalently, we regard the vector space $R^g\sh
(U) \otimes _k R^g\sh (V)$ as a $\OO _{\hat{A},\hat{0}}$-module
via the corresponding comultiplication map $\mu ^*:\OO
_{\hat{A},\hat{0}}\to \OO _{\hat{A}, \hat{0}}\otimes \OO
_{\hat{A},\hat{0}}$ given by $\mu ^*(x)=x\otimes 1+1\otimes x$.

\begin{proposition}\label{UtoU}
Let $\phi: U \to V$ be a homomorphism between unipotent vector
bundles. Then $im(\phi), ker(\phi), coker(\phi)$ are also
unipotent.
\end{proposition}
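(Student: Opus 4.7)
The plan is to transport the morphism $\phi$ to the Fourier-Mukai side, where unipotent vector bundles correspond to Artinian modules over the regular local ring $\OO_{\hat A,\hat 0}$; this target category is manifestly abelian, so kernels, images, and cokernels pose no difficulty there. Concretely, set $M:=R^g\sh(U)$ and $N:=R^g\sh(V)$, both Artinian modules at $\hat 0$, and let $\psi:=R^g\sh(\phi)\colon M\to N$. In the abelian category of Artinian modules one then has the short exact sequences
$$0\to\ker\psi\to M\to\mathrm{im}\,\psi\to 0,\qquad 0\to\mathrm{im}\,\psi\to N\to\mathrm{coker}\,\psi\to 0,$$
and via the equivalence recalled just before this proposition, each of $\ker\psi$, $\mathrm{im}\,\psi$, $\mathrm{coker}\,\psi$ corresponds to a unipotent vector bundle $\tilde K$, $\tilde I$, $\tilde C$ on $A$.

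The next step is to promote these to short exact sequences of coherent sheaves on $A$ itself. For this I would verify that $R\s$, restricted to Artinian modules supported at $\hat 0$, is concentrated in degree zero and is exact: such a module $M$ pulls back to a sheaf supported on $A\times\{\hat 0\}$, on which $\PP$ is trivial, and since $p_A$ restricts to an isomorphism $A\times\{\hat 0\}\to A$ the higher direct images vanish, so $R\s(M)=\s(M)$, which agrees with $(-1_A)^*U$ for $U$ the unipotent bundle corresponding to $M$ by Mukai's inversion $R\s\circ R\sh\cong(-1_A)^*[-g]$. Applying this exact functor and then $(-1_A)^*$ to the two displayed sequences yields honest short exact sequences
$$0\to\tilde K\to U\to\tilde I\to 0,\qquad 0\to\tilde I\to V\to\tilde C\to 0$$
in $\mathrm{Coh}(A)$, in particular exhibiting $\tilde K\subset U$ and $\tilde I\subset V$ as subbundles.

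Finally, full faithfulness of the Fourier-Mukai equivalence identifies $\phi$ with the composition $U\twoheadrightarrow\tilde I\hookrightarrow V$ coming from the factorization $\psi=(M\twoheadrightarrow\mathrm{im}\,\psi\hookrightarrow N)$. Reading off kernels, images, and cokernels from this factorization together with the two short exact sequences above gives $\ker\phi=\tilde K$, $\mathrm{im}\,\phi=\tilde I$, and $\mathrm{coker}\,\phi=\tilde C$, each of which is a unipotent vector bundle by construction. The main obstacle, and the only genuinely non-formal step, is the exactness claim for $R\s$ on Artinian modules at $\hat 0$: Mukai's theorem a priori gives only a derived equivalence, and without concentration in a single degree one would obtain only distinguished triangles on $A$ rather than genuine short exact sequences of sheaves, which would prevent one from recognizing $\tilde K,\tilde I,\tilde C$ as bona fide subsheaves and quotients inside $U$ and $V$.
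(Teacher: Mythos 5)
Your proof is correct, but it takes a genuinely different route from the paper's. The paper argues by elementary induction on $\mathrm{rank}(U)$: one picks a trivial sub-line bundle $\imath:\OO_A\hookrightarrow U$ and splits into two cases according to whether $\phi\circ\imath$ vanishes or not, in each case descending to a homomorphism $\phi'$ between unipotent bundles of smaller rank and comparing $\ker$, $\mathrm{im}$, $\mathrm{coker}$ with those of $\phi'$; this uses nothing beyond the definition of unipotence. You instead transport the whole problem to the dual side via the Fourier--Mukai equivalence, where the target category of Artinian modules supported at $\hat{0}$ is manifestly abelian, and then prove --- the genuinely non-formal step, which you isolate correctly --- that $R\s$ is concentrated in degree $0$ and exact on such modules (because the support of $p_{\hat{A}}^{*}M\otimes\PP$ maps finitely, hence affinely, to $A$), so that kernels, images, and cokernels computed among Artinian modules are carried to honest short exact sequences of sheaves on $A$; full faithfulness then identifies these with $\ker\phi$, $\mathrm{im}\,\phi$, $\mathrm{coker}\,\phi$ computed in $\mathrm{Coh}(A)$. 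Your route makes the abelianness of the category of unipotent bundles conceptually transparent and reuses machinery the paper has already set up, at the cost of front-loading the exactness verification for the inverse transform; the paper's induction is more elementary and entirely self-contained, requiring no appeal to the transform at all. Both arguments are valid given the background the paper cites from Mukai.
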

\begin{proof}
We proceed by induction on $r:=\rk (U)$. If $r=1$ the assertion is
clear. If $r\geq 2$, we consider the composition $\OO_A
\stackrel{\imath}{\to} U \stackrel{\phi}{\to} V$.

If $\phi \circ \imath =0$, then we have an induced homomorphism
${\phi}': U/\OO_A \to V$. Therefore, by induction,
$im(\phi)=im({\phi}')$ and $coker(\phi)=coker({\phi}')$ are
unipotent. Moreover we have a short exact sequence $0 \to \OO_A
\to ker(\phi) \to ker({\phi}') \to 0$. Since $ker(\phi)$ is an
extension of unipotent vector bundles, it is also unipotent.

If $\phi \circ \imath  \ne0$, then we have a homomorphism
${\phi}': U/ \OO_A \to V/\OO_A$ such that $ker(\phi) \cong
ker({\phi}')$, $coker(\phi) \cong coker({\phi}')$ . Finally, since
$im(\phi)$ is an extension of $im({\phi}')$ by $\OO_A$, it is also
unipotent.
\end{proof}
\section{Artinian modules without decomposable submodules}
Let $\hat{A}$ be an abelian variety of dimension $g$ with origin
$\hat{0}$ and $B$ be the regular local ring $\OO _{{A},{0}}\cong
\OO _{\hat{A},\hat{0}}$ with maximal ideal $\f m$.

\begin{lemma} The set of all unipotent vector bundles $U$ on $A$ with
$h^0( A,U)=1$ is in one to one correspondence with the set of all
Artinian $B$-modules of finite length without decomposable
submodules. \end{lemma}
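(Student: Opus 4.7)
The plan is to use the Fourier--Mukai equivalence $R^g\sh$ already invoked in the excerpt to translate the numerical condition $h^0(A,U)=1$ into the promised property of the associated $B$-module $M:=R^g\sh(U)$. By Mukai's theorem the assignment $U\mapsto M$ is a bijection between (isomorphism classes of) unipotent vector bundles on $A$ and Artinian $B$-modules of finite length, so everything reduces to matching up the extra condition on each side.

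To make this matching rigorous I would first verify that $R^g\sh$ is exact on the subcategory of unipotent vector bundles. Since $R^i\sh(\OO_A)=0$ for $i<g$ and $R^g\sh(\OO_A)\cong B/\f m=k(\hat 0)$, an easy induction on rank using the defining filtration shows that $R^i\sh(U)=0$ for every unipotent $U$ and every $i<g$. The Grothendieck long exact sequence then collapses on any short exact sequence of unipotent bundles, giving the exactness of $R^g\sh$. Consequently $R^g\sh$ is an equivalence of abelian categories and in particular preserves Hom-spaces, so, writing $k=B/\f m$,
\[
H^0(A,U)=\text{Hom}_{\OO_A}(\OO_A,U)\cong \text{Hom}_B(k,M)=\text{Soc}(M).
\]
Thus the condition $h^0(A,U)=1$ translates exactly into $\dim_k\text{Soc}(M)=1$.

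It remains to prove the elementary claim that for an Artinian $B$-module $M$ of finite length, $\dim_k\text{Soc}(M)=1$ if and only if $M$ contains no decomposable submodule. If $\dim_k\text{Soc}(M)\ge 2$ then any $2$-dimensional $k$-subspace of $\text{Soc}(M)$ already splits as a direct sum of $B$-submodules (since $\f m$ acts trivially on it), producing a decomposable submodule of $M$. Conversely, if some submodule $N\subset M$ decomposes nontrivially as $N=N_1\oplus N_2$, then $\text{Soc}(N)=\text{Soc}(N_1)\oplus\text{Soc}(N_2)$ has dimension at least $2$, and since $\text{Soc}(N)\subset\text{Soc}(M)$ we obtain $\dim_k\text{Soc}(M)\ge 2$.

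The main obstacle is to set up Mukai's equivalence cleanly as an equivalence of abelian categories so that $H^0(A,U)$ really corresponds to $\text{Hom}_B(k,M)=\text{Soc}(M)$; once this identification is in place the combinatorics of the socle handle the rest of the argument in a few lines.
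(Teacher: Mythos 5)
Your proof is correct, and it takes a somewhat cleaner route than the paper's. Both arguments rest on Mukai's equivalence between unipotent bundles on $A$ and finite-length $B$-modules (cited by the paper from \cite[Example 2.9]{Mukai}), but the two proofs exploit it differently. The paper works concretely with maps: a decomposable submodule gives $k\oplus k\hookrightarrow M$, which transforms to $\OO_A\oplus\OO_A\hookrightarrow U$, hence $h^0\ge 2$; conversely, starting from $h^0(U)\ge 2$ it builds $\phi:\OO_A\oplus\OO_A\to U$ injective on global sections and must then invoke Proposition \ref{UtoU} (unipotency of $\mathrm{im}\,\phi$, $\ker\phi$, $\mathrm{coker}\,\phi$) together with a rank argument to upgrade this to an honest injection of sheaves before applying the transform. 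Your argument short-circuits this by using full faithfulness of the (shifted) equivalence once and for all to identify
$H^0(A,U)=\mathrm{Hom}_{\OO_A}(\OO_A,U)\cong\mathrm{Hom}_B(k,M)=\mathrm{Soc}(M)$,
after which the whole lemma collapses to the elementary fact that a nonzero Artinian module over a local ring has no decomposable submodule iff its socle is one-dimensional; that last equivalence you verify correctly in both directions. The trade-off is that your route needs the vanishing $R^i\sh(U)=0$ for $i<g$ on unipotent bundles to be established explicitly (your induction on the filtration is the right argument), whereas the paper treats the equivalence as a black box and instead does the heavy lifting with Proposition \ref{UtoU}. Your version is a little more conceptual and avoids any separate discussion of when a map injective on global sections is injective as a sheaf map.
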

\begin{proof} By \cite[Example 2.9]{Mukai}, the Fourier-Mukai transform gives a
bijection between the category of unipotent vector bundles $U$ on $A$ and the category of Artinian
$B$-modules of finite length $M$, given by $U\to M=R^g \sh (U)$
and $M\to U= R^0\s (M)$. Suppose that $M$ has a decomposable
submodule, then there is a injective homomorphism of $B$-modules
$k\oplus k\to M$. Taking the Fourier-Mukai transform we get an
injective homomorphism $\OO _A\oplus \OO _A\to R^0\s (M)$ and
hence $h^0(R^0\s (M))\geq 2$. Suppose on the other hand that
$h^0(U)\geq 2$, then there is a homomorphism $\phi: \OO _A\oplus
\OO _A\to U$ which is injective on global sections. Let $V$ be the
image of $\phi$, then by \eqref{UtoU}, $V$ is a unipotent vector
bundle of rank at most $2$. If $\rk(V)=1$ then $U$ is a unipotent
line bundle with $h^0(V)=2$ which is impossible. Therefore
$\rk(V)=2$ and so $\phi$ is an injection. By \eqref{UtoU}, $coker
(\varphi )$ is also unipotent. In particular $R\sh (coker
(\varphi ))=R^g\sh (coker
(\varphi ))$. Therefore, taking the Fourier-Mukai
transform we get an injective homomorphism $\psi:k\oplus k\to R^g
\sh (U)$ and so $R^g \sh (U)$ has a decomposable submodule.
\end{proof}

We will now define a natural extension of Artinian $B$-modules of finite length with no
decomposable submodules. We begin by considering the
corresponding dual objects.
\begin{lemma} Let $M$ be an Artinian $B$-module of finite length with no
decomposable submodules, then $\Delta (M)$ is an Artinian
$B$-module of finite length with no decomposable quotients.
\end{lemma}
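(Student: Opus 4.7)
The plan is to reduce the statement to a property of Matlis duality on the category of finite-length $B$-modules. Since $B = \OO_{\hat{A},\hat{0}}$ is a regular local ring of dimension $g$ and $M$ is supported at $\hat{0}$ with finite length, I would first identify the functor $\Delta$ on this subcategory with the ordinary Matlis dualizing functor. Concretely, locally at $\hat{0}$ the complex $\Delta(M) = R{\mathcal H}{\rm om}(M, \OO_{\hat A})[g]$ reduces to the single module $\text{Ext}^g_B(M,B)$ placed in degree $0$: because $M$ has finite length (hence depth $0$) and $B$ is Cohen--Macaulay of dimension $g$, we get $\text{Ext}^i_B(M,B)=0$ for $i<g$, which is exactly the local duality / Auslander--Buchsbaum input needed.

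Granting this, I would then record the three key properties of $\Delta$ on finite-length $B$-modules:
\begin{enumerate}
\item[(i)] $\Delta$ is an exact contravariant functor from finite-length $B$-modules to themselves, and preserves length;
\item[(ii)] $\Delta$ commutes with finite direct sums and $\Delta(k)\cong k$;
\item[(iii)] biduality holds, $\Delta\circ\Delta\cong\text{id}$.
\end{enumerate}
Property (i) follows once the single-degree identification is in place, together with the fact that $\text{Ext}^g_B(-,B)$ is exact on the finite-length subcategory (since all higher Ext's vanish there for length reasons). Property (ii) is immediate from additivity of $R{\mathcal H}{\rm om}$ and the fact that $k$ has a Koszul resolution of length $g$, giving $\text{Ext}^g_B(k,B)\cong k$. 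Property (iii) is the standard Matlis biduality for Artinian modules over a complete (or henselian) regular local ring; it can also be read off from $\Delta_A\circ R\s = ((-1_A)^*\circ R\s\circ\Delta_{\hat A})[g]$ combined with the Fourier--Mukai inversion $R\s\circ R\sh\cong(-1_A)^*[-g]$.

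With these in hand the proof is a direct dualization. A decomposable quotient of $\Delta(M)$ is by definition a surjection $\Delta(M) \twoheadrightarrow k\oplus k$. Applying the exact contravariant functor $\Delta$ and using (ii), (iii), I obtain an injection
\[
k\oplus k \;\cong\; \Delta(k\oplus k) \;\hookrightarrow\; \Delta(\Delta(M)) \;\cong\; M,
\]
i.e., a decomposable submodule of $M$, contradicting the hypothesis. Finiteness of length of $\Delta(M)$ was already included in (i).

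The main obstacle is purely technical: verifying that $\Delta$ restricted to finite-length modules is concentrated in a single cohomological degree and agrees with Matlis duality up to a normalization. Once that identification is made, the rest is formal manipulation of an exact contravariant involution on an abelian category in which $k$ is the unique simple object.
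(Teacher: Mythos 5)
Your argument is correct and supplies precisely the details behind the paper's one-word proof (``Clear.''): on finite-length $B$-modules $\Delta$ collapses to $\mathrm{Ext}^g_B(-,B)$, an exact, length-preserving contravariant involution with $\Delta(k)\cong k$, so a decomposable quotient of $\Delta(M)$ dualizes to a decomposable submodule of $M$. One small imprecision: the vanishing of $\mathrm{Ext}^i_B(M,B)$ for $i>g$ comes from $B$ having global dimension $g$, not from $M$ having finite length (the latter is what forces vanishing for $i<g$), but this does not affect the conclusion.
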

\begin{proof} Clear.
\end{proof}

\begin{lemma}\label{principal}
Let $(B,\f m, k)$ be a local ring with an inclusion $k
\hookrightarrow B$ and $M$ be an Artinian $B$-module of finite length without decomposable
quotient modules. Then $M=Ba$ for some $a \in M$. In particular,
$M \cong B/ \ann (a)$.
\end{lemma}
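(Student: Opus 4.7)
The plan is to show that the hypothesis ``no decomposable quotient'' forces $M$ to be a cyclic $B$-module, after which the statement $M \cong B/\ann(a)$ is automatic. The key observation is that for an Artinian (hence finitely generated) $B$-module, being ``nearly cyclic'' is controlled by the dimension of $M/\f m M$ over $k = B/\f m$, and the decomposability condition translates directly into a bound on this dimension.

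First I would note that since $M$ has finite length it is finitely generated, so Nakayama's lemma applies: the minimal number of generators of $M$ equals $d := \dim _k (M/\f m M)$. The goal is therefore to prove $d \le 1$. If $d = 0$, then $M = 0 = B \cdot 0$; if $d = 1$, pick any $a \in M$ whose image generates $M/\f m M$, and Nakayama gives $M = Ba$.

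Next I would rule out $d \ge 2$ by contradiction. Suppose $\dim _k (M/\f m M) \ge 2$. Then there is a $k$-linear surjection $M/\f m M \twoheadrightarrow k \oplus k$ (projecting onto any two coordinates of a $k$-basis). Because $\f m$ annihilates both the source and the target, this map is automatically $B$-linear. Composing with the canonical surjection $M \twoheadrightarrow M/\f m M$ produces a surjective $B$-module map $M \twoheadrightarrow k \oplus k$, exhibiting $k \oplus k$ as a decomposable quotient of $M$. This contradicts the hypothesis, so indeed $d \le 1$, and hence $M = Ba$ for some $a \in M$. The final statement $M \cong B/\ann(a)$ then follows at once from the first isomorphism theorem applied to the surjection $B \to Ba$, $b \mapsto ba$.

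There is no real obstacle here: the proof is essentially a clean application of Nakayama's lemma once one unpacks the decomposability condition. The only point worth verifying carefully is that a surjection $M \twoheadrightarrow k \oplus k$ really does constitute a ``decomposable quotient'' in the sense intended by the previous lemma, but this is clear because $k \oplus k$ is visibly a non-trivial direct sum of non-zero submodules. The inclusion $k \hookrightarrow B$ is used only to regard $k$ unambiguously as a $B$-module so that the phrase ``quotient of $M$ by $k \oplus k$'' makes sense.
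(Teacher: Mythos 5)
Your proposal is correct and follows essentially the same route as the paper: compute $d=\dim_k(M/\f m M)$, apply Nakayama when $d\le 1$, and when $d\ge 2$ exhibit a decomposable quotient isomorphic to $k\oplus k$ via the reduction $M\twoheadrightarrow M/\f m M$. Your phrasing of the $d\ge 2$ case (an explicit $B$-linear surjection onto $k\oplus k$) is a slightly cleaner way of saying what the paper states as ``$B\bar a + B\bar b$ is a decomposable quotient,'' but the underlying argument is identical.
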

\begin{proof}
$M/\f m M$ is a  vector space over $k$. If $\dimm_k M/\f m M=0$,
then by Nakayama's Lemma we have $M=0$. If $\dimm_k M/\f m M=1$,
pick any element $a \in M -\f
m M$, then $Ba+\f m M = M$. By Nakayama's Lemma again, $Ba=M$, so that $M
\cong B/ \ann (a)$.

If $\dimm _k M/\f m M\ge 2$, we pick
$a,b \in M- \f m M$ such that their image $\bar{a},\bar{b} \in M/
\f m M$ are linearly independent. Then $B
\bar{a}+B \bar{b}$ is a decomposable quotient module of $M$, which is the
required contradiction.
\end{proof}

Consider now an Artinian $B$-module of finite length $M$ without
decomposable submodules, where $B$ is the local ring
$\OO_{\hat{A},\hat{0}}$. Then $N=\Delta (M)$ is an Artinian
$B$-module of finite length without decomposable quotient modules.
Therefore $N \cong B/I$ for some $I$ with $\sqrt{I} =\mathfrak{m}$.

For any $x \in \f m - \f m ^2$, we let $ e(x):= \min \{k | x^k \in
I\}$. We pick successive elements $$x_i \in \f m  - ({\rm
Span}({x}_1,\ldots,{ x}_{i-1})+\f m ^2)$$ with minimal
$e_i=e(x_i)$. We then let $J=J(I):=(x_1^{e(x_1)},\ldots
,x_g^{e(x_g)}) \subset I$. The elements $(x_1,\ldots,x_g)$ form a
system of parameters and generate $\f m$. We let $\bar{N}:= B/J$
and $\bar{M}=\Delta (\bar{N})$. From the surjection $\bar{N}\to
N$, we obtain an injection $M\to \bar{M}$.

We say that the module $\bar{M}$ (or more precisely the injection $M\to \bar{M}$) is an {\it
algebraically splitting extension} (or {\it AS-extension}), of
$M$.

We will say that a $B$-module of
the form $B/(x_1^{e_1},\ldots ,x_g^{e_g})$ is an {\it AS-module}.
Note that by Lemma \ref{ASD} below, the AS-extensions defined above are AS-modules.
\begin{lemma}\label{ASD} Let $M$ be any AS-B-module then $M\cong \Delta (M)$.
\end{lemma}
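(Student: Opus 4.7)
The plan is to compute $\Delta(M)$ directly from a finite free resolution of $M$. Since $M = B/(x_1^{e_1}, \ldots, x_g^{e_g})$ is an Artinian $B$-module supported at $\hat{0}$, the sheaf functor $\Delta$ on such modules agrees, after passage to the stalk at $\hat 0$, with $R\mathrm{Hom}_B(-, B)[g]$. It therefore suffices to show that $\mathrm{Ext}^i_B(M, B) = 0$ for $i \neq g$ and that $\mathrm{Ext}^g_B(M, B) \cong M$ as $B$-modules; shifting by $[g]$ then places this $\mathrm{Ext}^g$ in cohomological degree $0$ and delivers the claimed isomorphism.

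First I would check that $(x_1^{e_1}, \ldots, x_g^{e_g})$ is a regular sequence in $B$. Because $B$ is a regular (hence Cohen--Macaulay) local ring of dimension $g$ and $(x_1, \ldots, x_g)$ is a system of parameters generating $\f m$, it is a regular sequence; replacing each $x_i$ by the power $x_i^{e_i}$ preserves this property, so the Koszul complex $K_\bullet := K_\bullet(x_1^{e_1}, \ldots, x_g^{e_g})$, with $K_i = \bigwedge^i B^g$, is a finite free resolution of $M$.

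Next I would invoke the classical self-duality of the Koszul complex. The exterior pairing $\bigwedge^i B^g \otimes \bigwedge^{g-i} B^g \to \bigwedge^g B^g \cong B$ identifies $\mathrm{Hom}_B(K_i, B) \cong K_{g-i}$ and, under the right sign convention, induces an isomorphism of complexes $\mathrm{Hom}_B(K_\bullet, B) \cong K_\bullet[-g]$. Since $K_\bullet$ is a resolution of $M$, the cohomology of $\mathrm{Hom}_B(K_\bullet, B)$ is concentrated in degree $g$ with value $H_0(K_\bullet) = M$. Hence $\mathrm{Ext}^i_B(M, B) = 0$ for $i < g$ and $\mathrm{Ext}^g_B(M, B) \cong M$, and combining with the discussion of the first paragraph yields $\Delta(M) \cong M$.

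The main obstacle is verifying that the Koszul self-duality produces an isomorphism $\mathrm{Ext}^g_B(M, B) \cong M$ as $B$-modules rather than merely as $k$-vector spaces, i.e., that the $B$-action on the cohomology of the dualised Koszul complex is precisely multiplication on $B/(x_1^{e_1}, \ldots, x_g^{e_g})$. This is essentially the statement that the complete intersection $B/J$ is Gorenstein, and reduces to the standard check that the exterior pairing intertwines (up to sign) the Koszul differentials on both sides; once that is in hand, the conclusion $M \cong \Delta(M)$ is immediate.
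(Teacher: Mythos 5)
Your argument is exactly the paper's intended proof: the paper's one-line justification is precisely "consider the Koszul complex of the regular sequence $x_1^{e_1},\ldots,x_g^{e_g}$," and you have correctly filled in the details via Koszul self-duality, showing $\mathrm{Ext}^i_B(M,B)=0$ for $i\neq g$ and $\mathrm{Ext}^g_B(M,B)\cong M$. Nothing to add or correct.
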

\begin{proof}

The proof follows easily by considering the Koszul complex given by the
regular sequence $x_1^{e_1},\ldots , x_g ^{e_g}$.
\end{proof}

In summary we have:
\begin{proposition}
Let $M$ be a Artinian $B$-module without decomposable submodules,
then $M \cong I/J$ for some ideals $I$ and $J=(x_1^{e_1},\ldots ,x_g^{e_g})$.
\end{proposition}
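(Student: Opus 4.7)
The plan is essentially to assemble the machinery developed in the preceding discussion; no new constructions are required. Starting from an Artinian $B$-module $M$ with no decomposable submodules, the first step is to pass to $N=\Delta(M)$. The preceding lemma guarantees that $N$ has no decomposable quotients, so Lemma \ref{principal} applies and produces an ideal $I$ with $\sqrt{I}=\f m$ and $N\cong B/I$.

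The next step is to invoke the explicit procedure described just above: pick elements $x_1,\ldots,x_g \in \f m$ by successive minimal choices of $e(x_i)$, obtaining a regular system of parameters, and set $J=(x_1^{e_1},\ldots,x_g^{e_g})\subseteq I$. Then $\bar N=B/J$ surjects onto $N=B/I$. Because $\Delta$ is a contravariant exact functor on the category of Artinian $B$-modules of finite length (for $M$ of finite length supported at $\f m$, the sheafy $\mathcal{E}xt^i(M,\OO_A)$ vanishes unless $i=g$, so the long exact $\mathrm{Ext}$ sequence collapses to a short exact sequence in $\Delta$), applying $\Delta$ produces an injection $M=\Delta(N)\hookrightarrow \Delta(\bar N)=\bar M$.

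Finally, I would identify $\bar M$ using Lemma \ref{ASD}: since $\bar N=B/J$ is an AS-module, $\bar M=\Delta(\bar N)\cong \bar N=B/J$. Thus $M$ is realized as a $B$-submodule of $B/J$. Every $B$-submodule of $B/J$ is of the form $I'/J$ for some ideal $I'\supseteq J$ (the standard correspondence between submodules of a quotient and intermediate ideals), and renaming $I'$ to $I$ gives $M\cong I/J$, as required.

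There is essentially no obstacle here: all the nontrivial content sits in the preceding results (the principal-quotient lemma, the construction of the AS-extension $M\hookrightarrow \bar M$, and the self-duality of AS-modules). The proposition is really just a compact restatement of the fact that every such $M$ embeds in its AS-extension, which is canonically $B/J$. The only point one must not rush over is the exactness of $\Delta$ in the range of finite length modules, which is what converts the surjection $\bar N \twoheadrightarrow N$ into the crucial injection $M\hookrightarrow \bar M$.
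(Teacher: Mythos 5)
Your proof is correct and is essentially the paper's intended argument: the paper leaves the proposition without a formal proof (it is stated as a "summary" of the preceding construction), and you have simply assembled the preceding pieces — pass to $N=\Delta(M)$, apply Lemma \ref{principal} to write $N\cong B/I$, form $J$ and the AS-extension $\bar N=B/J$, dualize the surjection $\bar N\twoheadrightarrow N$ to get $M\hookrightarrow\bar M$, and use Lemma \ref{ASD} to identify $\bar M\cong B/J$. The remark on exactness of $\Delta$ on finite-length modules (so that the surjection dualizes to an injection) is a correct and welcome clarification of a point the paper takes for granted.
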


\begin{example}\label{ex1}
Let $M=B/(x_1^{e_1},\ldots ,x_g^{e_g})$ and $M'=B/(x_1^{f_1},\ldots ,x_g^{f_g})$. We wish to compute $M*M'$.
\end{example}

By the description following Proposition \ref{fm}, it can be
realized as  $${M} \ot_k {M'} = k[x_1,\ldots ,x_g] /
(x_1^{e_1},\ldots ,x_g^{e_g}) \ot k[y_1,\ldots ,y_g] /
(y_1^{f_1},\ldots ,y_g^{f_g}),$$ regarded  as $k[t_1,\ldots
,t_g]$-module by letting $t_i:=x_i+y_i$. We first treat the case
that $g=1$. The general case is obtained by taking the tensor product.

Let $e=e_1$, $f=f_1$, $x=x_1$, $y=y_1$ and $t=t_1$. Assume that $e\leq f$.
We will need the following two facts:\\
(1) Let $0\leq d\leq e-1$. Then $t^{e+f-2d-2} v_d \ne 0$ for any
$v_d  \ne 0$  homogeneous of
degree $d$. \\
(2) Let  $0\leq d\leq e-1$. Then there exists an
homogeneous element $t^{(d)}$ of degree $d$, which is unique up to multiplication by a scalar
and such that
$t^{e+f-2d-1} t^{(d)}=0$.

To see these two facts, we write $v_d = \sum_{i=0}^{d} a_i x^i
y^{d-i}$ and $t^{e+f-2d-2} v_d = \sum_{i=1}^{d+1} c_i x^{e-i}
y^{f-d+i-2}$. Then we have

$$ \left( \begin{array}{c} c_1\\ c_2\\ \vdots \\ c_{d+1} \end{array} \right) =
\left( \begin{array}{ccc} C^{e+f-2d-2}_{e-d-1} & \ldots &
C^{e+f-2d-2}_{e-1} \\ \vdots & & \vdots \\
C^{e+f-2d-2}_{e-2d-1} & \ldots & C^{e+f-2d-2}_{e-d-1}
\end{array} \right) \left( \begin{array}{c} a_d\\ a_{d-1}\\ \vdots
\\ a_0
\end{array} \right)
$$
where $C^j_i=j\cdots (j-i+1)/i!$.
An explicit computation (cf. \cite{Roberts})
shows that the $(d+1) \times (d+1)$
matrix is non-singular. This proves $(1)$.

Similarly, if we now write $t^{(d)} = \sum_{i=0}^{d} a_i x^i y^{d-i}$ and
$t^{e+f-2d-1} t^{(d)} = \sum_{i=1}^{d} c_i x^{e-i}
y^{f-d+i-1}$. Then we have

$$ \left( \begin{array}{c} c_1\\ c_2\\ \vdots \\ c_{d} \end{array} \right) =
\left( \begin{array}{ccc} C^{e+f-2d-1}_{e-d-1} & \ldots &
C^{e+f-2d-1}_{e-1} \\ \vdots & & \vdots \\
C^{e+f-2d-1}_{e-2d} & \ldots & C^{e+f-2d-1}_{e-d}
\end{array} \right) \left( \begin{array}{c} a_d\\ a_{d-1}\\ \vdots
\\ a_0
\end{array} \right)
$$
This $d \times (d+1)$ matrix has rank $d$ by an analogous
computation. Thus $(2)$ follows.

We next claim that, as a $k[t]$-module
$$ k[x,y]/(x^e,y^f) = \bigoplus_{d=0}^{e-1} t^{(d)} k[t]/(t^{e+f-2d-1})
.$$ To see this, it suffices to verify the equality as $k$-vector
spaces. Let $F_d$ be the subspace of homogeneous polynomial of
degree $d$. Clearly $F_d$ contains $\{t^{(0)} t^{d}, t^{(1)}
t^{d-1},\ldots , t^{(d-1)} t, t^{(d)}\}$. It suffices to show that
these are linearly independent.  We proceed by induction on $d\in \{0,1,\ldots , e-1\}$.
Hence we assume that $\{t^{(0)} t^{d-1}, t^{(1)} t^{d-2},\ldots ,
t^{(d-2)} t, t^{(d-1)}\}$ are independent, therefore so are $\{t^{(0)}
t^{d}, t^{(1)} t^{d-1},\ldots , t^{(d-1)}t\}$. If $t^{(d)}=t v_{d-1}$
for some $v_{d-1}$, then $ t^{e+f-2d-1} t^{(d)} = t^{e+f-2d}
v_{d-1}=0$ which contradicts $(1)$. Therefore, $t^{(d)} \ne t
v_{d-1}$ and so $t^{(d)}$ is not contained in the subspace
generated by $\{t^{(0)} t^{d},
t^{(1)} t^{d-1},\ldots , t^{(d-1)}t\}$. For $d\in \{ e, e+f-2\}$ the
proof follows by a similar argument. This completes the proof of
the claim.

Therefore, for any $g>0$, we have a decomposition $${M} \ot
{M'}=\bigoplus_{I\in [0,\epsilon _1-1]\times \cdots \times
[0,\epsilon_g-1]} V_{I}$$ where $\epsilon _i={\rm min}\{e_i,f_i \}$ ,
$I=(i_1,\ldots ,i_g)$ and $V_{I}$ is generated by $\two {i_1} \cdots
\twg {i_g}$ and $V_I\cong B/(t_1^{e_1+f_1-1-2i_1},\ldots
,t_g^{e_g+f_g-1-2i_g})$.

We  let $V_{max}:=V_{(0,\ldots,0)}$ be the {\it maximal component
of} ${M} * {M'}$.

\begin{definition}\label{d-g}
Given a multiplication map $\varphi: M*M' \to M_2$ of Artinian $B$-modules of finite length. We say that it
is {\it geometric} if $M_2$ has no decomposable submodules and for
every submodules $L <M$ and $L' <M'$, the image of
$\varphi(L*L') $ has dimension
at least $ \dimm_k(L)+\dimm_k(L ')-1$.
\end{definition}

\begin{proposition}\label{p-as}
Given a geometric multiplication map $\varphi: M*M' \to M_2$.
If $M$ and $M'$ are AS-modules, then restriction of $\varphi$ to the maximal
component $V_{max}$ of $M*M'$ is injective. Moreover,
$\varphi(M*M')=\varphi(V_{max})$.
\end{proposition}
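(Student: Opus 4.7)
The plan is to handle the two assertions separately.

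For the injectivity of $\varphi|_{V_{max}}$, I would argue by contradiction using the socle. Since $V_{max}\cong B/(t_1^{a_1},\dots,t_g^{a_g})$ (writing $a_j:=e_j+f_j-1$) is cyclic Artinian, it has a one-dimensional socle that is the unique minimal nonzero submodule, so any nonzero kernel of $\varphi|_{V_{max}}$ must contain this socle. The socle generator is $\prod_j t_j^{a_j-1}\cdot(1\otimes 1)$, and expanding $\prod_j(x_j+y_j)^{a_j-1}$ inside $\bigotimes_j k[x_j,y_j]/(x_j^{e_j},y_j^{f_j})$ shows that it equals a nonzero scalar multiple of $\prod_j x_j^{e_j-1}y_j^{f_j-1}$ (only a single binomial term survives in each factor). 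Now take $L=k\cdot\prod_j x_j^{e_j-1}\subset M$ and $L'=k\cdot\prod_j y_j^{f_j-1}\subset M'$, each of dimension one. Since $\prod_j x_j^{e_j-1}y_j^{f_j-1}$ is annihilated by every $t_j$, $L*L'$ is the one-dimensional span of this element, and the geometric condition yields $\dimm_k\varphi(L*L')\ge 1$, contradicting the vanishing forced by the socle lying in the kernel.

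For the equality $\varphi(M*M')=\varphi(V_{max})$, I would identify $\varphi(V_{max})$ with the $J$-torsion submodule of $M_2$. Set $J:=\ann(V_{max})=(t_1^{a_1},\dots,t_g^{a_g})$ and $R:=B/J$. Since $J$ is generated by a regular sequence, $R$ is an Artinian local complete intersection, hence Gorenstein; in particular the injective hull of its residue field satisfies $E_R(k)\cong R\cong V_{max}$. Let $N:=\{w\in M_2:Jw=0\}$; as a submodule of $M_2$ it has one-dimensional socle, and since $J$ annihilates it, it is naturally an $R$-module. Any Artinian $R$-module with one-dimensional socle embeds into $E_R(k)$, so $\dimm_k N\le\dimm_k V_{max}$. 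On the other hand $\varphi(V_{max})\subseteq N$ and $\dimm_k\varphi(V_{max})=\dimm_k V_{max}$ by the injectivity already proved, forcing $\varphi(V_{max})=N$. Finally, for every index $I$ the annihilator $\ann(V_I)=(t_1^{a_1-2i_1},\dots,t_g^{a_g-2i_g})$ contains $J$, so $J\cdot\varphi(V_I)=0$ and hence $\varphi(V_I)\subseteq N=\varphi(V_{max})$; summing over $I$ gives $\varphi(M*M')=\varphi(V_{max})$.

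The injectivity step is a short socle calculation once the socle element of $V_{max}$ is made explicit inside $M\otimes M'$. The main obstacle is the second step, where one must bring in the Gorenstein/injective-hull input to upgrade the inclusion $\varphi(V_{max})\subseteq N$ to an equality; the argument crucially uses the hypothesis that $M_2$ has no decomposable submodules (equivalently, one-dimensional socle).
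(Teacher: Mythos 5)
Your proof is correct. The injectivity argument is essentially the paper's: you identify the socle of $V_{max}$ as $\mathrm{Soc}(M)*\mathrm{Soc}(M')$ (you make this explicit via the binomial expansion; the paper just asserts it), and the geometric condition applied to the two socles rules out the socle of $V_{max}$ lying in the kernel, which for a cyclic Artinian module kills the whole kernel. The second step is where you diverge. The paper appeals to Proposition~\ref{ext}: it applies that proposition to the surjection onto the image $M_2'$, obtaining an embedding $M_2'\hookrightarrow V_{max}$, and concludes by a dimension count. Proposition~\ref{ext} in turn is proved by passing to $\Delta(M_2')$, showing it is annihilated by $J$ and is cyclic (via Lemma~\ref{principal}, which uses the ``no decomposable quotients'' hypothesis), surjecting $B/J$ onto it, and dualizing. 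Your argument is the Matlis-dual presentation of the same idea: you work directly with the $J$-torsion submodule $N\subset M_2$, observe that $R=B/J$ is Artinian Gorenstein with $E_R(k)\cong R\cong V_{max}$, and use ``one-dimensional socle $\Rightarrow$ embeds in $E_R(k)$'' in place of ``no decomposable quotient $\Rightarrow$ cyclic, then dualize.'' What you gain is a self-contained second step that bypasses Proposition~\ref{ext} and Lemma~\ref{principal} entirely and avoids the detour through $\Delta(M_2)$; the price is invoking the Gorenstein/injective-hull machinery explicitly. One small streamlining you could make: since $J=\ann(V_{max})=\bigcap_I\ann(V_I)=\ann(M*M')$, you get $\varphi(M*M')\subseteq N$ in one stroke without going component by component.
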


\begin{proof}
Let $M=B/(x_1^{e_1},\ldots ,x_g^{e_g})$ and $M'=B/(x_1^{f_1},\ldots ,x_g^{f_g})$. Then ${M} *{M'}= \bigoplus
V_{I}$ as above.

We first claim that $\varphi: V_{max } \to M_2$ is injective. To see
this, let $N=(x_1^{e_1-1}\cdots
x_g^{e_g-1})/(x_1^{e_1},\ldots,x_g^{e_g})= {\rm Soc} (M)$ and
$N'=(x_1^{f_1-1}\cdots x_g^{f_g-1})/$ $(x_1^{f_1},\ldots,x_g^{f_g})=
{\rm Soc} (M')$ be the unique rank $1$ submodules given by the annihilator of
the maximal ideal. Then $N*N'$ is
the unique rank $1$ submodule in $V_{max}$. Since $\varphi$ is
geometric, $N*N'$ is not in the kernel of $\varphi$. Suppose now
that $0\ne f \in V_{max}$ is in the kernel of $\varphi$, then  there
exist integers $a_1,\ldots ,a_g\geq 0$ such that $t_1^{a_1}\cdots
t_g^{a_g}f$ generates $N*N'$ and this leads to an easy
contradiction.

We will now show that $\varphi(M*M')=\varphi(V_{max})$. Let
$M_2'$ be the image of $\varphi$, then clearly the restriction
$\varphi ':M*M'\to M_2'$ is also geometric. By Proposition \ref{ext} below,
$M_2 '$ admits an extension to $V_{max}$ and hence they are isomorphic
and the claim follows.
\end{proof}

\begin{proposition}\label{ext}
Given a surjective geometric multiplication map $\varphi: M*M' \to
M_2$. Let  $\bar{M}$ and $\bar{M}'$ be a AS-extensions of $M$ and $M'$.
Then $M_2$ admits an extension to the
maximal component of $\bar{M}*\bar{M'}$.
\end{proposition}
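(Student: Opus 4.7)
The plan is to pass to Matlis duals via $\Delta$ and reduce the desired embedding to an explicit annihilator computation on the Pontryagin product side.

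By Lemma \ref{principal} (together with the auxiliary lemma that $\Delta$ takes modules without decomposable submodules to modules without decomposable quotients), $N:=\Delta(M)$, $N':=\Delta(M')$ and $N_2:=\Delta(M_2)$ are all cyclic $B$-modules; write $N_2=B/I_2$. (Cyclicity of $N_2$ uses exactly the no-decomposable-submodules clause of the geometric hypothesis.) Applying $\Delta$ to $\varphi$ and combining the two assertions of Proposition \ref{fm} (together with the fact that $(-1_{\hat A})^*$ commutes with the Pontryagin product, by base change along $(-1)\circ\mu=\mu\circ((-1)\times(-1))$) identifies $\Delta(M*M')$ with $N*N'$ and produces an injection $\iota:N_2\hookrightarrow N*N'$. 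Since $V_{max}$ is an AS-module and hence self-dual by Lemma \ref{ASD}, the sought embedding $M_2\hookrightarrow V_{max}$ is equivalent under $\Delta$ to a surjection $V_{max}\twoheadrightarrow N_2$. Writing the AS-extensions in a common system of parameters $(t_1,\ldots,t_g)$ as $\bar M=B/(t_1^{e_1},\ldots,t_g^{e_g})$ and $\bar M'=B/(t_1^{f_1},\ldots,t_g^{f_g})$, Example \ref{ex1} gives $V_{max}\cong B/(t_1^{e_1+f_1-1},\ldots,t_g^{e_g+f_g-1})$, so the whole problem reduces to showing the containment $(t_1^{e_1+f_1-1},\ldots,t_g^{e_g+f_g-1})\subseteq I_2$.

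The core computation is then that each $t_i^{e_i+f_i-1}$ annihilates not just $\iota(N_2)$ but all of $N*N'$. Since the surjections $\bar N\twoheadrightarrow N$ and $\bar N'\twoheadrightarrow N'$ give $t_i^{e_i}=0$ on $N$ and $t_i^{f_i}=0$ on $N'$, the comultiplication $\mu^*(t_i)=t_i\otimes 1+1\otimes t_i$ yields
\begin{equation*}
t_i^{e_i+f_i-1}\cdot(n\otimes n')=\sum_{j=0}^{e_i+f_i-1}\binom{e_i+f_i-1}{j}(t_i^{e_i+f_i-1-j}n)\otimes(t_i^j n'),
\end{equation*}
and each summand vanishes: when $j\leq f_i-1$ one has $e_i+f_i-1-j\geq e_i$, killing the first factor; when $j\geq f_i$, the second factor is killed. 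Restricting to $\iota(N_2)\subseteq N*N'$ then gives $t_i^{e_i+f_i-1}\in I_2$, as required.

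The main obstacle I foresee is the technical point of presenting both AS-extensions in a single system of parameters $(t_i)$ that is simultaneously compatible with the linear form $\mu^*(t_i)=t_i\otimes 1+1\otimes t_i$ of the formal group law on $\hat A$ (so that the description of $V_{max}$ from Example \ref{ex1} applies verbatim and the binomial expansion above is valid). In characteristic zero the formal group law on $\hat A$ can be linearized at $\hat 0$, and the freedom in the AS-construction to pick SoP elements of minimal $e$-value should permit this compatible choice, but this step deserves care. It is worth noting that only the no-decomposable-submodules clause of Definition \ref{d-g} is invoked by this plan; the lower-bound clause of geometricity never enters.
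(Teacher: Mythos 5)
Your proof is correct and essentially matches the paper's: both identify $V_{max}\cong B/J$ with $J=(t_1^{e_1+f_1-1},\ldots,t_g^{e_g+f_g-1})$, show $J$ annihilates $\Delta(M_2)$, invoke cyclicity of $\Delta(M_2)$ via Lemma \ref{principal} to get a surjection $B/J\twoheadrightarrow \Delta(M_2)$, and dualize. The only difference is cosmetic: you dualize $\varphi$ first and verify $J\cdot(N*N')=0$ by an explicit binomial expansion, whereas the paper observes that $J$ annihilates $\bar M * \bar M'\supseteq M*M'$ (via Example \ref{ex1}), hence the quotient $M_2$, and then passes to $\Delta(M_2)$ by regarding the $t_i$ as endomorphisms and noting they dualize to zero — both reduce to the same arithmetic fact. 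Your concern about a compatible linearized system of parameters is legitimate but is a tacit assumption the paper also makes (it writes both AS-extensions in the same $x_i$'s and takes $\mu^*(x)=x\otimes 1+1\otimes x$ from the start); in the application to the main theorem this is automatic since $\bar M_{t-1}$ and $\bar M_1$ are presented in a common system, and your observation that only the no-decomposable-submodules clause of geometricity enters is also accurate.
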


\begin{proof}
 We write $\bar{M}=B/(x_1^{e_1},\ldots,x_g^{e_g})$ and $\bar{M}'
=B/(x_1^{f_1},\ldots,x_g^{f_g})$.
We keep the notation as in Example \ref{ex1}.

Let $J=(t_1^{e_1+f_1-1},\ldots ,t_g^{e_g+f_g-1})$.  We note that $J$
annihilates $\bar{M}*\bar{M'}$, and hence it also
annihilates $M*M'$ and $M_2$. We claim that $J$ annihilates $\Delta
(M_2)$. To see this, we regard $t_1,\ldots , t_g$ as elements of
$End_k(M_2)$. Thus $t_i^{e_i+f_i-1}=0 \in End_k(M_2)$ implies that
$t_i^{e_i+f_i-1}=0 \in End_k(\Delta (M_2))$.

Note that $\Delta (M_2)$ is principal and $\Delta (M_2) \cong
B/\ann (\Delta (M_2))$  by Lemma \ref{principal}. Therefore, we have
surjective homomorphism
$$ B/J \twoheadrightarrow \Delta (M_2).$$
Dualizing it, we get an injective homomorphism $M_2
\hookrightarrow\Delta ( B/J) \cong B/J$. Where  $B/J$ is
isomorphic to the maximal component of
$\bar{M}*\bar{M}'$.
\end{proof}
\section{Proof of the main theorem}
In this section we will prove the main theorem.

We will make use of various multiplier ideal sheaves. We refer the
reader to \cite{Lazarsfeld04} for their definitions and main
properties.

\begin{theorem} \label{kdim} Let $a:X\to A$ be a surjective morphism
with general fiber $F$ from
a smooth projective variety $X$ with $\kappa (X)=0$
to an abelian variety with $\dim A=g$.
Then $\kappa (F)\leq g$.
\end{theorem}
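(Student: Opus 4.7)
The plan is to carry out the strategy sketched in the introduction: convert the growth question for $P_{tN_1}(F)$ into a growth question for lengths of Artinian $B$-modules equipped with Pontryagin multiplication, and bound the latter by $O(t^g)$ using the AS-extension machinery of Section 2.

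First, since $\kappa(X)=0$, pick a sufficiently divisible $N$ with $P_N(X)=1$ and apply the Fourier-Mukai construction of \cite{Hacon04}: choose $N_1$ so that $|N_1 K_F|$ induces the Iitaka fibration of $F$, and consider the unipotent bundles $V_{tN_1} = a_*(\ox^{tN_1}\ot \II_{tN_1-1})$ of rank $P_{tN_1}(F) \sim t^{\kappa(F)}$ with $h^0=1$. Let $U_t \subset V_{tN_1}$ be the image of the natural multiplication $V_{N_1}^{\ot t}\to V_{tN_1}$; by Proposition \ref{UtoU}, $U_t$ is unipotent. Setting $M_t := R^g\sh(U_t)$, and noting that $h^0(U_t)\leq h^0(V_{tN_1})=1$, the first lemma of Section 2 says that each $M_t$ is an Artinian $B$-module with no decomposable submodules and $\dimm_k M_t = \rk U_t$, while Proposition \ref{fm} translates the tensor multiplication into Pontryagin products $M_t * M_s \to M_{t+s}$.

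The crucial step is to verify that these Pontryagin products are \emph{geometric} in the sense of Definition \ref{d-g}: for submodules $L\subset M_t$, $L'\subset M_s$ the image of $L*L'$ in $M_{t+s}$ has length at least $\dimm_k L + \dimm_k L' - 1$. This length lower bound must be extracted from the geometric origin of $V_N$ as a pushforward of sections of $\ox^N$ twisted by multiplier ideals, via non-vanishing statements for products of sections -- hence the use of multiplier ideals announced at the start of Section 3. Granted geometricity, the construction preceding Lemma \ref{ASD} produces AS-extensions $M_t \hookrightarrow \bar{M}_t = B/(x_1^{e_1(t)},\ldots,x_g^{e_g(t)})$, and iterating Proposition \ref{ext} (together with Proposition \ref{p-as} and the explicit description of the maximal component in Example \ref{ex1}) lifts the products to $\bar{M}_t * \bar{M}_s \to \bar{M}_{t+s}$ in such a way that $M_{t+s}\hookrightarrow \bar{M}_{t+s}$ sits inside the maximal component of $\bar{M}_t*\bar{M}_s$. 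This forces the subadditivity $e_i(t+s)\leq e_i(t)+e_i(s)-1$, so $e_i(t)=O(t)$ and $\dimm_k \bar{M}_t=\prod_i e_i(t)=O(t^g)$. Since $\rk U_t = \dimm_k M_t \leq \dimm_k \bar{M}_t = O(t^g)$ and $\rk U_t \sim t^{\kappa(F)}$, we conclude $\kappa(F)\leq g$.

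The main obstacle is the verification of geometricity of the Pontryagin products on the $M_t$'s: one must translate the geometric input, namely the compatibility of the multiplier ideals $\II_{N-1}$ with multiplication on tensor powers and the resulting non-vanishing of products of sections, into the numerical lower bound $\dimm_k L + \dimm_k L' - 1$ on images of products of arbitrary submodules. A secondary bookkeeping point is confirming that $\rk U_t$ genuinely grows like $t^{\kappa(F)}$ (rather than merely being bounded above by it), which should follow from the fact that $U_t$ captures the image of the section ring of $N_1 K_F$ in degree $t$ under a fibration realizing the Iitaka dimension.
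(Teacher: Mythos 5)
Your proposal follows essentially the same route as the paper: unipotent bundles $V_{tN_1}$ from \cite{Hacon04}, passage to Artinian modules $M_t=R^g\sh(U_t)$ with no decomposable submodules, verification that the Pontryagin multiplications $M_t*M_s\to M_{t+s}$ are geometric, and iteration of the AS-extension machinery to bound $\dimm_k \bar{M}_t$ by $O(t^g)$. For the ``main obstacle'' you flag --- geometricity of the products --- the paper's resolution is cleaner than you suggest: the multiplier ideals are used only in Lemma~\ref{LV} to show the multiplication maps $V_N\ot V_M\to a_*\omega_X^{N+M}$ factor through $V_{N+M}$ (so the $\varphi_{t,s}$ exist at all), while the numerical bound $\dimm \varphi_{t,s}(L*L')\geq \dimm L+\dimm L'-1$ then follows from the fact that multiplication $H^0(F,\omega_F^{tN_1})\ot H^0(F,\omega_F^{sN_1})\to H^0(F,\omega_F^{(t+s)N_1})$ is non-zero on simple tensors, combined with a classical theorem of H.~Hopf on the image dimension of such bilinear maps.
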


\begin{proof}
By \cite[\S 5]{Hacon04}, for all $N\geq 2$ such that $P_N(X)=1$,
there exists an
ideal sheaf $\II _{N-1}\subset \OO _X$ such that
$V_N=a_*(\omega _X^{ N}\otimes \II _{N-1})$ is a unipotent vector bundle
of rank $P_N(F)$ and such that $H^0(A,V_N)=P_N(X)= 1$.
(If $P_N(X)\ne 1$, then $a_*(\omega _X^{ N}\otimes \II _{N-1})$ is given by successive extensions by some $P\in {\rm Pic }_{\rm tors}^0(A)$.)
We let $V_0=\OO _A$. 
(Note that by Theorem 4 of \cite{CH02}, if $P_1(X)=1$, then
$a_*\omega _X =\OO _A$.) We fix an integer $e\geq 2$ such that $P_N(X)=1$
for any integer $N>0$ divisible by $e$.
\begin{lemma}\label{LV}If $N$ and $M$ are positive integers divisible by $e$ then the homomorphisms $V_{N}{\ot}V_M\to a_* \omega _X^{N+M}$ factor through
$V_{N+M}$.
\end{lemma}
\begin{proof}
Let $H$ be an ample line bundle on $A$.
Recall that by definition (cf. \S 5 of \cite{Hacon04}), there exists a number $\epsilon _0>0$ such that
$ \II _{N}=\II (||NK_X+\epsilon a^*H||)$  for any rational number $0<\epsilon \leq \epsilon _0$.
Therefore, we may fix a rational number $1\gg \epsilon >0$ and a sufficiently divisible
integer $t\gg 0$ such that $ \II _{N-1}= \II (\frac 1 t \cdot |t(N-1)K_X+t
\epsilon a^*H|)$, $ \II _{M-1}= \II (\frac 1 t \cdot |t(M-1)K_X+t
\epsilon a^*H|)$ and $ \II _{N+M-1}= \II (\frac 1 t \cdot |t(N+M-1)K_X+t
\epsilon 2 a^*H|)$.
We let $m:A'\to A$ be an \'etale map of abelian varieties such that $\epsilon
m^*H=H'$ is a very ample Cartier divisor.
Let $X'=X\times _A A'$ and let $m':X'\to X$ and
$a':X'\to A'$ be the corresponding morphisms.
Let $V_M'=m^*V_M$, then $V_M'\ot H'$ is generated.
Notice moreover that
\begin{claim} We have $V_N'=(a')_*(\omega _{X'}^N\ot \II _{N-1}')$ where
$\II _{N-1}'=\II (\frac 1 t \cdot |t(N-1)K_{X'}+t (a')^*H'|)$.\end{claim}
\begin{proof}
This follows easily by flat base change and the fact that, by Theorem 11.2.16 of \cite{Lazarsfeld04}, we have
$(m')^*\II (||(N-1)K_X+ \epsilon a^*H||)=\II (||(m')^*((N-1)K_X+
\epsilon a^*H)||)$.
\end{proof}
We now claim that there is a homomorphism
$$(\omega _{X'}^N\ot \II _{N-1}')\ot H^0(X',\OO _{X'}(\omega _{X'}^M\ot \II _{M-1}'\ot (a')^*H'))\to \omega _{X'}^{N+M}\ot \II _{N+M-1}'\ot (a')^*H'.$$
To check this, it suffices to verify that for any section $s\in
H^0(X',\OO _{X'}(\omega _{X'}^M$ $\ot \II _{M-1}'\ot (a')^*H')),$
one has that $\II _{N-1}'\cdot s \subset \omega _{X'}^M \ot (a')^*H'\ot \II _{N+M-1}'$. This in
turn follows from the inclusion of linear series
$$|t((N-1)K_{X'}+(a')^*H')|\times |MK_{X'}+(a')^*H'|^{\times t}\to
|t((N+M-1)K_{X'}+2(a')^*H')|.$$
Pushing forward, we obtain a homomorphism
$$V_N'\otimes H^0(A',V_M'\ot H')\to V_{N+M}'\ot H'\subset (a')_*(\omega _{X'}^{N+M})\ot H'.$$
Since $V_M'\ot H'$ is generated, the map $V_N'\otimes V_M'\ot H'\to (a')_*(\omega _{X'}^{N+M})\ot H' $ factors through $V_{N+M}'\ot H' $.
Therefore, we have a homomorphism $V_N'\otimes V_M'\to V_{N+M}'$ i.e. an element of $$H^0(A',(V_N'\otimes V_M')^*\ot V_{N+M}')\cong \bigoplus _{P\in Ker (\hat{m})}H^0(A,(V_N\otimes V_M)^*\ot V_{N+M}\ot P)$$ $$\qquad \cong H^0(A,(V_N\otimes V_M)^*\ot V_{N+M}).$$
This is the required
homomorphism $V_N\otimes V_M\to V_{N+M}$.
\end{proof}
We now fix a positive integer $N_1>0$ such that
$|N_1K_F|$ defines a rational map which is birational to the
Iitaka fibration of $F$. Let $U_{t}$ be the image of the
homomorphisms $V_{N_1}^{\otimes t}\to V_{tN_1}$. By Proposition
\ref{UtoU} the $U_{t}$ are unipotent vector bundles of rank $r_t$
where $r_t=O(t^{\kappa (F)})$. Let $M_t$ be the Artinian $\OO
_{\hat {A},\hat {0}}$ module given by $M_t=R^g\sh (U_t )$. Then
$\dim _k M_t=r_t$. We have surjective homomorphisms
$$\varphi _{t,s}:M_t *M_s\to M_{t+s}$$
corresponding to the surjective homomorphisms
$U_t\otimes U_s\to U_{t+s}$. We have that:
\begin{lemma} For any submodules $N\subset M_t$ and $N'\subset M_{s}$ one has that
$\dim \varphi _{t,s}(N*N')\geq \dim N+\dim N' -1$.
\end{lemma}
\begin{proof}
It suffices to show that given the sub-bundles $W=R^0\s (N)\subset
U_t$ and $W'=R^0\s (N')\subset U_{s}$, one has that the rank of
the image of $W\otimes W'$ in $U_{t+t'}$ is at least $\rk (W )+\rk
(W' )-1$. This follows easily from the fact that the map
$$\psi:H^0(F,\omega _F^{tN_1})\otimes H^0(F,\omega _F^{sN_1}) \to
H^0(F,\omega _F^{(t+s)N_1})$$ is non-zero on tensors of the form
$0\ne v\otimes w$ and therefore (by a result of H. Hopf) for any
sub-spaces $V$ of $H^0(F,\omega _F^{tN_1})$ and $V'$ of
$H^0(F,\omega _F^{sN_1})$, one has $\dim \psi (V\otimes V') \geq
\dim V +\dim V ' -1$.
\end{proof}
We now define a sequence of finite length modules
$$\bar{M_{t}}\cong B/(x_1^{te_1-t+1},\ldots ,x_g^{te_g-t+1}) $$
and injective homomorphisms
$M_{t}\hookrightarrow \bar{M}_{t}$ as follows:

For $t=1$, $\bar{M}_1$ is the AS-extension of $M_1$ defined as in
\S 2. In particular there is an injective homomorphism
$M_1\hookrightarrow \bar{M}_1$. Assume now that the inclusion
$M_{{t-1}}\hookrightarrow \bar{M}_{{t-1}}$ has been defined. We
consider the geometric multiplication map (in the sense of \S 2)
$$\varphi _{{t-1},1}:M_{{t-1}}*M_{{1}}\to M_{{t}}.$$
Since $\varphi _{{t-1},1}$ is surjective, by Proposition \ref{ext},
as $\bar{M}_{{t}}$ is the maximal component of
$\bar{M}_{{t-1}}*\bar{M}_{{1}}$, then we have an injective
homomorphism $M_{{t}}\hookrightarrow \bar{M}_{{t}}$ and a surjection
$\bar{M}_{{t-1}}*\bar{M}_{{1}}\to \bar{M}_{{t}}$.

Let $e=e_1\cdots e_g$ be the length of
$\bar{M}_1$, then by Example \ref{ex1}, one easily sees that
$\bar{M}_{t}\cong B/(x_1^{te_1-t+1},\ldots ,x_g^{te_g-t+1}) $
as claimed above.
It follows that
$$r_{t}\leq \dim _k\bar{M}_{t}=\prod (t(e_i-1)+1)<e\cdot t^g,$$
and therefore $\kappa (F)\leq g$.
\end{proof}

\end{document}